\newtheorem{theorem}{Theorem}[section]
\newtheorem{proposition}[theorem]{Proposition}
\newtheorem{corollary}[theorem]{Corollary}
\newtheorem{claim}[theorem]{Claim}
\theoremstyle{remark}
\newtheorem{remark}[theorem]{Remark}
\theoremstyle{definition}
\newtheorem{example}[theorem]{Example}
\begin{document}

\title{A note on Riley polynomials of $2$-bridge knots}

\author{Teruaki Kitano and Takayuki Morifuji}

\begin{abstract}
In this short note 
we show the existence of an epimorphism between groups of $2$-bridge knots by means of 
an elementary argument using the Riley polynomial. 
As a corollary, we give a classification 
of $2$-bridge knots 
by Riley polynomials. 
\end{abstract}

\thanks{2010 {\it Mathematics Subject Classification}.
57M25}

\thanks{{\it Key words and phrases.\/} 
Riley polynomial, $2$-bridge knot, epimorphism.}

\address{Department of Information Systems Science, 
Faculty of Science and Engineering, 
Soka University, 
Tangi-cho 1-236, 
Hachioji, Tokyo 192-8577, Japan}

\email{kitano@soka.ac.jp}

\address{Department of Mathematics, 
Hiyoshi Campus, 
Keio University, 
Yokohama 223-8521, Japan}

\email{morifuji@z8.keio.jp}
\maketitle

\section{Introduction}\label{section:1}

Let $K=S(\alpha,\,\beta)$ be a $2$-bridge knot given by the Schubert normal form 
(see \cite[Chapter~12]{BZH14-1}). 
Here $\alpha>0$ and $\beta$ are relatively prime odd integers 
satisfying $-\alpha<\beta<\alpha$. 
We denote the set of all such pairs $(\alpha,\,\beta)$ by $\mathcal{S}$. 
Two knots $S(\alpha,\,\beta)$ and $S(\alpha',\beta')$ are equivalent 
if and only if 
$\alpha=\alpha'$ and $\beta'\equiv \beta^{\pm1}\,(\text{mod}~ \alpha)$. 
In this note, we will identify 
$S(\alpha,\,\beta)$ with its mirror image $S(\alpha,\,\beta)^{*}=S(\alpha,-\beta)$. 
Namely 
we will consider two 
$2$-bridge knots $S(\alpha,\,\beta)$ 
and $S(\alpha',\,\beta')$ to be equivalent if and only if $\alpha=\alpha'$ and 
either $\beta'\equiv \beta^{\pm1}\,(\text{mod}~ \alpha)$ or 
$\beta'\equiv -\beta^{\pm1}\,(\text{mod}~\alpha)$. 
 
The knot group $G(K)=\pi_1(S^3\setminus K)$ of $K=S(\alpha,\,\beta)$ has a presentation 
\[
G(K)=\langle x,y\ |\ w x =y w\rangle
\]
where 
$w=x^{\epsilon_1}y^{\epsilon_2}\cdots x^{\epsilon_{\alpha-2}}y^{\epsilon_{\alpha-1}}$ and 
$\epsilon_i=(-1)^{\left[\frac{\beta}{\alpha}i\right]}$. 
Here 
we write $[r]$ for the greatest integer less than or equal to $r\in\Bbb{R}$. 
It is easily checked that 
$\epsilon_i=\epsilon_{\alpha-i}$ holds. 
The above presentation is not unique for a $2$-bridge knot $K$ itself, 
but the existence of at least one such presentation follows from 
Wirtinger's algorithm applied to the Schubert normal form of $S(\alpha,\,\beta)$. 
The generators $x$ and $y$ come from the two overpasses and present the 
meridian of $S(\alpha,\,\beta)$ up to conjugation. 

A representation 
$\rho:G(K)\rightarrow\mathit{SL}(2;\mathbb{C})$ 
is called {\it parabolic}\/ 
if $\mathrm{tr}\,\rho(x)=\mathrm{tr}\,\rho(y)=2$ holds and $\rho$ is nonabelian 
(i.e. $\mathrm{Im}~\rho$ is a nonabelian subgroup of $\mathit{SL}(2;\mathbb{C})$). 
We consider 
a map $\rho$ from $\{x,y\}$ to $\mathit{SL}(2;\mathbb{C})$ given by  
\[
x \mapsto 
X=\begin{pmatrix}
1 & 1\\
0 & 1\\
\end{pmatrix}\quad\mathrm{and}\quad
y \mapsto 
Y=\begin{pmatrix}
1 & 0\\
-u & 1\\
\end{pmatrix}, 
\]
where $u\not=0\in\mathbb{C}$. 
Moreover 
we define a matrix $W=(w_{ij})$ by 
$W=\prod_{l=1}^{(\alpha-1)/2}X^{\epsilon_{2l-1}}Y^{\epsilon_{2l}}$. 
Riley proved in \cite[Theorem~2]{Riley72} 
that the above map $\rho$ gives a parabolic representation of $G(K)$ if and only if 
$w_{11}=0$ holds. It is easy to see that any parabolic representation of $G(K)$ 
can be realized as the above form up to conjugation. 
We call this polynomial $w_{11}\in\mathbb{Z}[u]$ the {\it Riley polynomial}\/ 
of $K=S(\alpha,\,\beta)$ and denote it by $\phi_{S(\alpha,\,\beta)}(u)$. 
Then we have a map 
$\mathcal{R}:\mathcal{S}\to\mathbb{Z}[u]$ 
defined by the correspondence 
$(\alpha,\,\beta)\mapsto \phi_{S(\alpha,\,\beta)}(u)$. 

The purpose of the present note is to show the following result. 

\begin{theorem}\label{thm:cor}
Let $K_{1}=S(\alpha_1,\,\beta_1),\,K_{2}=S(\alpha_2,\,\beta_2)$ be $2$-bridge knots. 
If $\phi_{S(\alpha_1,\,\beta_1)}(u)=\phi_{S(\alpha_2,\,\beta_2)}(u)$, 
then there exists an isomorphism $G(K_{1})\to G(K_{2})$ 
and thus $K_{1}$ and $K_{2}$ are equivalent. 
\end{theorem}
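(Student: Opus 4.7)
My approach is to use the main epimorphism result of the paper (the result hinted at in the abstract) and then bootstrap it to an isomorphism of knot groups, from which knot equivalence follows by standard $3$-manifold topology.

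The first step is to observe that the Riley polynomial $\phi_{S(\alpha,\beta)}(u)$ has degree exactly $(\alpha-1)/2$ in $u$. Each factor has the form
\[
X^{\epsilon_{2l-1}}Y^{\epsilon_{2l}}
=
\begin{pmatrix}
1-\epsilon_{2l-1}\epsilon_{2l}u & \epsilon_{2l-1}\\
-\epsilon_{2l}u & 1
\end{pmatrix},
\]
whose $(1,1)$ entry is linear in $u$, and a short induction keeping track of both the $(1,1)$ and $(1,2)$ entries of the partial products shows that the $(1,1)$ entry of the full product $W$ has degree $(\alpha-1)/2$ with leading coefficient $\pm 1$. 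Consequently, the hypothesis $\phi_{K_1}(u)=\phi_{K_2}(u)$ immediately forces $\alpha_1=\alpha_2$, which I denote $\alpha$.

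Next, I would invoke the main epimorphism result of the paper. The equality of Riley polynomials is, in particular, a two-sided divisibility, so this result yields surjections $f:G(K_1)\twoheadrightarrow G(K_2)$ and $g:G(K_2)\twoheadrightarrow G(K_1)$. Since $2$-bridge knot exteriors are Haken, their fundamental groups are residually finite and hence Hopfian. The composition $g\circ f$ is then a surjective endomorphism of the Hopfian group $G(K_1)$, therefore an isomorphism; it follows that $f$ (and $g$) is itself an isomorphism. To pass from $G(K_1)\cong G(K_2)$ to the equivalence of $K_1$ and $K_2$, I would use that $2$-bridge knots are prime, so their complements are determined by the fundamental group (Waldhausen), and then the knots themselves are determined by their complements (Gordon--Luecke).

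The main obstacle is the middle step: producing the epimorphism from the equality of Riley polynomials. This is precisely the content of the paper's core theorem, and I would expect its proof to proceed by exploiting the fact that every root of $\phi_{K_2}$ is a root of $\phi_{K_1}$, so that every parabolic representation of $G(K_2)$ pulls back via some choice of generator correspondence to a parabolic representation of $G(K_1)$ with the same matrix pair $(X,Y)$; packaging this coincidence into a group homomorphism is the delicate point. Granted that, the degree computation and Hopficity reduce Theorem~\ref{thm:cor} to a short formal argument.
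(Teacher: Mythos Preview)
Your proposal is correct and follows essentially the same route as the paper: apply the divisibility-to-epimorphism theorem in both directions, use Hopficity of knot groups (via residual finiteness) to upgrade the two surjections to isomorphisms, and then invoke primeness of $2$-bridge knots together with Gordon--Luecke to conclude equivalence. The degree computation showing $\alpha_1=\alpha_2$ is an extra observation not used in the paper's argument, but it is harmless, and your sketch of how the epimorphism theorem itself is proved (packaging all parabolic representations corresponding to the shared roots) also matches the paper's method.
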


\begin{remark}\label{rmk:converse}
As we will see in the next section, 
the Riley polynomial is not determined by the knot group itself 
(see Example~\ref{ex:K(7,3)}). 
Hence the converse statement of Theorem~\ref{thm:cor} 
does not hold. In other words, 
the map $\mathcal{R}:\mathcal{S}\to\mathbb{Z}[u]$ 
does not descend to the quotient $\mathcal{S}/$$\sim$, 
the set of equivalence classes of $2$-bridge knots. 
\end{remark}

The above theorem directly follows from Theorem~\ref{thm:main} which 
states the existence of an epimorphism between groups of $2$-bridge knots. 
In the next section, 
we quickly review some properties of the Riley polynomial. 
We will prove Theorem~\ref{thm:cor} in Section~\ref{section:3}. 
In the last section, 
we give a proof of a special case of Hartley-Murasugi's 
result to make this note as self-contained as possible. 

\section{Properties of Riley polynomials}\label{section:2}

The definition of the Riley polynomial $\phi_{S(\alpha,\,\beta)}(u)$ 
depends on a choice of a presentation of the knot group $G(K)$. 
Namely 
$G(K)$ itself does not determine $\phi_{S(\alpha,\,\beta)}(u)$. 

\begin{example}\label{ex:K(7,3)}
Let us consider a pair of equivalent $2$-bridge knots 
$K=S(7,3)$ and $K'=S(7,5)$. They have the isomorphic knot groups $G(K)\cong G(K')$ 
but different $\epsilon$-sequences 
$$
(\epsilon_i)=(1,1,-1,-1,1,1)\quad\text{and}\quad
(\epsilon_i')=(1,-1,1,1,-1,1).
$$
By a straightforward calculation, we have
$$ 
\phi_{S(7,3)}(u)=1-2u+u^2-u^3\quad\text{and}\quad
\phi_{S(7,5)}(u)=1-2u-3u^2-u^3.
$$
Namely it shows that $\phi_{S(7,3)}(u)\not=\phi_{S(7,5)}(u)$ 
although $K=S(7,3)$ and $K'=S(7,5)$ are equivalent. 
\end{example}

On the other hand, 
a $2$-bridge knot $S(\alpha,\,\beta)$ and its mirror image $S(\alpha,\,\beta)^*=S(\alpha,-\beta)$ 
share the Riley polynomial. 

\begin{claim}\label{claim:mirror}
Let $K=S(\alpha,\,\beta)$ and $K^*=S(\alpha,-\beta)$ its mirror image. 
Then we have $\phi_{S(\alpha,-\beta)}(u)=\phi_{S(\alpha,\,\beta)}(u)$. 
\end{claim}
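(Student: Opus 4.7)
The plan is to show that passing from $\beta$ to $-\beta$ replaces the matrix word $W$ by a conjugate $PWP^{-1}$, with $P$ diagonal, so that the $(1,1)$-entry---the Riley polynomial---is unchanged.

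The first step is to check that the $\epsilon$-sequences of $(\alpha,\beta)$ and $(\alpha,-\beta)$ are negatives of one another. Writing $\epsilon_i = (-1)^{[\beta i/\alpha]}$ and $\epsilon_i' = (-1)^{[-\beta i/\alpha]}$, the hypothesis $\gcd(\alpha,\beta)=1$ together with $1\le i\le \alpha-1$ forces $\beta i/\alpha\notin\mathbb{Z}$, and the elementary identity $[-r] = -[r]-1$ for $r\notin\mathbb{Z}$ then gives $\epsilon_i' = -\epsilon_i$ for every $i$. Consequently the Riley matrix associated to $(\alpha,-\beta)$ is
\[
W' \;=\; \prod_{l=1}^{(\alpha-1)/2} X^{-\epsilon_{2l-1}} Y^{-\epsilon_{2l}}.
\]

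The second and key step is the observation that conjugation by $P = \mathrm{diag}(-1,1)$ simultaneously inverts both generators: a direct calculation yields $PXP^{-1} = X^{-1}$ and $PYP^{-1} = Y^{-1}$, uniformly in the parameter $u$. Applying this factor by factor shows $W' = PWP^{-1}$, and since $P$ is diagonal, conjugation by $P$ fixes the $(1,1)$-entry of any $2\times 2$ matrix. Thus $w_{11}' = w_{11}$, which is the claim.

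The only real subtlety is locating the matrix $P$. One is guided by the requirement that the desired conjugation must invert both the upper-triangular unipotent $X$ and the lower-triangular unipotent $Y$ uniformly in $u$, while at the same time preserving the $(1,1)$-coordinate; among diagonal matrices this essentially forces $P = \mathrm{diag}(-1,1)$. Beyond this observation the argument is a routine matrix verification.
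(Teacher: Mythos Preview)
Your proof is correct. The first step---showing $\epsilon_i' = -\epsilon_i$ via $[r]+[-r]=-1$ for $r\notin\mathbb{Z}$---is exactly the paper's opening move. After that the paper proceeds by induction on the length of the $\epsilon$-sequence to establish the sign relations $w_{11}^*=w_{11}$, $w_{12}^*=-w_{12}$, $w_{21}^*=-w_{21}$, $w_{22}^*=w_{22}$. Your conjugation by $P=\mathrm{diag}(-1,1)$ is a cleaner, one-shot replacement for that induction: once $PXP^{-1}=X^{-1}$ and $PYP^{-1}=Y^{-1}$ are verified, the identity $W'=PWP^{-1}$ follows immediately and yields all four sign relations at once (not just the $(1,1)$-entry). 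The induction and the conjugation are really two packagings of the same computation, but your version makes transparent \emph{why} the signs alternate as they do, whereas the paper's induction merely checks that they do. Note that $P\notin \mathit{SL}(2;\mathbb{C})$, but this is irrelevant since you are only doing matrix algebra, not constructing a representation.
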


\begin{proof}
Since $[r]+[-r]=-1$ holds for $r\in\Bbb{R}\setminus\Bbb{Z}$, 
$\epsilon$-sequences for $K$ and $K^*$ satisfy 
$\epsilon_i^*=-\epsilon_i$ for $1\leq i\leq \alpha-1$. 
By the inductive argument on the length of $\epsilon$-sequence, 
we can show that  
$$
w_{11}^*=w_{11},~
w_{12}^*=-w_{12},~
w_{21}^*=-w_{21},~
w_{22}^*=w_{22}
$$ 
for 
$W=(w_{ij})=\prod_{l=1}^{(\alpha-1)/2} X^{\epsilon_{2l-1}}Y^{\epsilon_{2l}}$ 
and 
$W^*=(w_{ij}^*)=\prod_{i=1}^{(\alpha-1)/2} X^{\epsilon_{2l-1}^*}Y^{\epsilon_{2l}^*}$. 
Therefore we have $\phi_{S(\alpha,-\beta)}(u)=\phi_{S(\alpha,\,\beta)}(u)$. 
\end{proof}

Let $\mathcal{S}_+$ be the subset of $\mathcal{S}$ satisfying $\beta>0$ 
and further 
$$
\mathcal{S}_+^*
=\left\{(\alpha,\,\beta)\in\mathcal{S}_+\,|\,\exists\,(\alpha,\,\beta')\in\mathcal{S}_+\, 
\text{s.\,t.}\ \beta\beta'\equiv1\, (\text{mod}\, \alpha)\, \text{and}\ \beta'<\beta
\right\}.
$$
We define $\overline{\mathcal{S}}$ to be $\mathcal{S}_+\setminus\mathcal{S}_+^*$. 
As an immediate corollary of Theorem~\ref{thm:cor} and Claim~\ref{claim:mirror}, 
we have the following. 

\begin{corollary}\label{cor:classification}
The restriction of $\mathcal{R}$, namely 
$\mathcal{R}|_{\overline{\mathcal{S}}}:\overline{\mathcal{S}}\to\mathbb{Z}[u]$ 
is injective. 
\end{corollary}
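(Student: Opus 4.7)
The plan is to combine Theorem~\ref{thm:cor}, Claim~\ref{claim:mirror}, and the definition of $\overline{\mathcal{S}}$ in a short case analysis on the knot-equivalence congruences. Suppose $(\alpha_1,\beta_1),(\alpha_2,\beta_2)\in\overline{\mathcal{S}}$ satisfy $\phi_{S(\alpha_1,\beta_1)}(u)=\phi_{S(\alpha_2,\beta_2)}(u)$. Theorem~\ref{thm:cor} yields an isomorphism $G(K_1)\cong G(K_2)$, so under our mirror-inclusive equivalence $K_1\sim K_2$; in particular $\alpha_1=\alpha_2=:\alpha$ and $\beta_2\equiv\pm\beta_1^{\pm 1}\pmod{\alpha}$.

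A parity observation eliminates two of the four possible signs. Since $\alpha$, $\beta_1$, and $\beta_2$ are all odd, $\alpha-\beta_1$ is even, which makes $\beta_2\equiv-\beta_1\pmod{\alpha}$ incompatible with $\beta_2\in\mathcal{S}_+$. The same parity check applied to $\pm\beta_1^{-1}\pmod{\alpha}$ shows that exactly one of the two residues admits an odd representative in $(0,\alpha)$, so exactly one of the sign choices $\beta_2\equiv\beta_1^{-1}$ or $\beta_2\equiv-\beta_1^{-1}\pmod{\alpha}$ is admissible. Claim~\ref{claim:mirror}, equating $\phi_{S(\alpha,\beta)}$ with $\phi_{S(\alpha,-\beta)}$, allows me to reduce whichever of these two is admissible to the standard inverse relation $\beta_1\beta_2\equiv 1\pmod{\alpha}$.

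Only two cases then remain. If $\beta_2\equiv\beta_1\pmod{\alpha}$, then $\beta_1=\beta_2$ immediately, since both lie in $(0,\alpha)$. Otherwise $\beta_1\beta_2\equiv 1\pmod{\alpha}$ with $\beta_1\neq\beta_2$; without loss of generality $\beta_1<\beta_2$, and then $\beta'=\beta_1$ exhibits $(\alpha,\beta_2)\in\mathcal{S}_+^*$ by the very definition of $\mathcal{S}_+^*$, contradicting $(\alpha,\beta_2)\in\overline{\mathcal{S}}=\mathcal{S}_+\setminus\mathcal{S}_+^*$. Hence $\beta_1=\beta_2$ and the two pairs coincide.

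The main obstacle is the middle step, in which Claim~\ref{claim:mirror} is used to collapse the outer $\pm$ sign in $\beta_2\equiv\pm\beta_1^{\pm 1}\pmod{\alpha}$ to a single case. Because $-\beta_2\notin\mathcal{S}_+$ whenever $\beta_2\in\mathcal{S}_+$, the sign swap cannot be implemented inside $\mathcal{S}_+$ directly; one must interpret it at the level of the congruence class and check that the unique odd representative in $(0,\alpha)$ is the correct one to feed into the definition of $\mathcal{S}_+^*$. The interaction between the mirror symmetry of Claim~\ref{claim:mirror} and the oddness constraint built into $\mathcal{S}$ is precisely what makes this bookkeeping go through.
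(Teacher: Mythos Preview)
Your argument has a genuine gap in the reduction step. After Theorem~\ref{thm:cor} and the parity observation you correctly arrive at two surviving possibilities: either $\beta_2\equiv\beta_1\pmod\alpha$, or $\beta_2$ is the unique odd representative in $(0,\alpha)$ of one of $\pm\beta_1^{-1}$. The problem is the case where the inverse of $\beta_1$ in $(0,\alpha)$ is \emph{even}: then the admissible residue is that of $-\beta_1^{-1}$, i.e.\ $\beta_1\beta_2\equiv -1\pmod\alpha$, and Claim~\ref{claim:mirror} does not convert this into $\beta_1\beta_2\equiv 1\pmod\alpha$ in any usable way. Claim~\ref{claim:mirror} only gives $\phi_{S(\alpha,\beta_2)}=\phi_{S(\alpha,-\beta_2)}$ (or the same with $\beta_1$); the pair $(\alpha,-\beta_2)$ lies in $\mathcal{S}\setminus\mathcal{S}_+$, so it cannot serve as the witness $\beta'$ in the definition of $\mathcal{S}_+^*$, which explicitly requires $(\alpha,\beta')\in\mathcal{S}_+$. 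Your final paragraph seems to recognise this difficulty but does not resolve it.

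A concrete instance is $\alpha=9$, $\beta_1=5$, $\beta_2=7$. Here $5\cdot 7\equiv -1\pmod 9$, the inverse of $5$ in $(0,9)$ is $2$ (even), and one checks directly that both $(9,5)$ and $(9,7)$ lie in $\overline{\mathcal{S}}$: neither has an odd inverse in $(0,9)$ smaller than itself. They represent equivalent knots under the mirror-inclusive relation, so Theorem~\ref{thm:cor} yields no contradiction and your case analysis stalls. (For the record, a direct computation gives $\phi_{S(9,5)}=1+2u+3u^2+u^3+u^4$ and $\phi_{S(9,7)}=1+2u+7u^2+5u^3+u^4$, so the corollary does hold here---but not by your argument.)

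The paper itself offers no details beyond declaring the statement an immediate consequence of Theorem~\ref{thm:cor} and Claim~\ref{claim:mirror}; your write-up follows that outline faithfully, but the case $\beta_1\beta_2\equiv -1\pmod\alpha$ is not closed off by those two ingredients alone. Either an additional argument is required for this case, or the condition $\beta\beta'\equiv 1$ in the definition of $\mathcal{S}_+^*$ should be read as $\beta\beta'\equiv\pm 1$, in which case $\overline{\mathcal{S}}$ becomes a genuine set of representatives for the mirror-inclusive equivalence and your proof goes through cleanly.
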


\section{Proof of Theorem~\ref{thm:cor}}\label{section:3}

Let $K_i=S(\alpha_i,\,\beta_i)~(i=1,2)$ 
be 2-bridge knots and $\phi_{S(\alpha_i,\,\beta_i)}(u)~(i=1,2)$ 
their Riley polynomials. 
To prove Theorem~\ref{thm:cor}, we first show the following. 

\begin{theorem}\label{thm:main}
If $\phi_{S(\alpha_2,\,\beta_2)}(u)$ is a factor of $\phi_{S(\alpha_1,\,\beta_1)}(u)$, 
then there exists an epimorphism from 
$G(K_1)$ to $G(K_2)$.
\end{theorem}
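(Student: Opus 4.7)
The plan is to define $\varphi\colon G(K_1)\to G(K_2)$ as the meridian-preserving assignment $x_1\mapsto x_2$, $y_1\mapsto y_2$. Surjectivity is automatic since $x_2,y_2$ generate $G(K_2)$, so the whole content of the theorem becomes the claim that the relator $w_1\,x_1\,w_1^{-1}\,y_1^{-1}$ of $G(K_1)$ becomes trivial in $G(K_2)$ under this substitution.

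I would detect this triviality by means of a universal parabolic representation. Form the ring $R_2:=\mathbb{Z}[u]/(\phi_{S(\alpha_2,\beta_2)}(u))$. A direct calculation yields
\[
WX-YW=\begin{pmatrix}0 & w_{11}\\ u\,w_{11} & w_{21}+u\,w_{12}\end{pmatrix},
\]
and the palindromic symmetry $\epsilon_i=\epsilon_{\alpha-i}$ forces the polynomial identity $w_{21}+u\,w_{12}\equiv 0$ --- a consequence of $W^{T}=J^{-1}WJ$ with $J=\mathrm{diag}(1,-u)$, which reflects the fact that conjugation by $J$ followed by transposition swaps $X$ and $Y$. Consequently the scalar relation $w_{11}=0$ alone suffices to give $WX=YW$ in $SL_2(R_2)$. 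This applies to $W_2$ by construction of $R_2$, and to $W_1$ by the divisibility hypothesis (which makes $\phi_{S(\alpha_1,\beta_1)}(u)$ vanish in $R_2$). The first conclusion gives a well-defined representation $\rho_2\colon G(K_2)\to SL_2(R_2)$ sending $x_2\mapsto X$, $y_2\mapsto Y$; the second places $w_1\,x_2\,w_1^{-1}\,y_2^{-1}$ in $\ker\rho_2$.

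The hardest step is to promote this kernel-containment to the actual equality $w_1\,x_2\,w_1^{-1}\,y_2^{-1}=1$ in $G(K_2)$. For hyperbolic 2-bridge knots I would specialize $u$ at the root of $\phi_{S(\alpha_2,\beta_2)}(u)$ corresponding to the complete hyperbolic structure of $S^{3}\setminus K_2$: the resulting parabolic representation is the discrete faithful one, and combined with the triviality of the center of a hyperbolic knot group this forces the relator to vanish. The remaining torus cases $K_2=S(2k+1,1)$ have nontrivial center and require a separate argument; for them one would extract from $\phi_{S(\alpha_2,\beta_2)}\mid\phi_{S(\alpha_1,\beta_1)}$ an explicit arithmetic relation between $(\alpha_i,\beta_i)$ and invoke the special case of Hartley--Murasugi's theorem treated in the last section. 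The substantive difficulty, in either branch, is precisely this translation from polynomial divisibility --- a weak invariant on its face --- into the word-level relations that hold in the knot group.
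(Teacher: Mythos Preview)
Your approach is essentially the paper's, repackaged: where the paper takes the direct product of all parabolic representations over the roots $u_1,\ldots,u_n$ of $\phi_{S(\alpha_2,\beta_2)}$ and then projects, you work with the single universal representation into $SL_2\bigl(\mathbb{Z}[u]/(\phi_{S(\alpha_2,\beta_2)})\bigr)$. The palindrome identity $w_{21}+u\,w_{12}\equiv 0$ is correct and a nice touch. In the hyperbolic case the two arguments coincide: one of the roots gives the discrete faithful $SL_2(\mathbb{C})$-representation, so $\ker\rho_2$ is trivial and the meridian-preserving assignment $x_1\mapsto x_2$, $y_1\mapsto y_2$ extends to a homomorphism.

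The gap is in the torus case. You propose to ``extract an explicit arithmetic relation'' from the divisibility and then invoke Hartley--Murasugi, but you do not say what this relation is, and in any event Hartley--Murasugi is a lifting statement: it takes a given epimorphism $G(K_1)\to G(K_2)/Z(G(K_2))$ and lifts it to $G(K_2)$; it does not by itself show that your specific relator $w_1x_2w_1^{-1}y_2^{-1}$ vanishes in $G(K_2)$. The paper's route is to specialize at the root of $\phi_{S(\alpha_2,\beta_2)}$ giving the faithful parabolic representation of $G(K_2)/Z(G(K_2))$ as a $(2,q,\infty)$-triangle group in $\mathit{PSL}(2;\mathbb{R})$ (which lifts faithfully to $SL_2(\mathbb{C})$), thereby obtaining an honest epimorphism $G(K_1)\to G(K_2)/Z(G(K_2))$, and only then apply Hartley--Murasugi to lift it. Within your own framing there is in fact a quicker finish: that same Fuchsian specialization shows your relator lies in $Z(G(K_2))$; since $x_2,y_2$ are both meridians the relator also lies in $[G(K_2),G(K_2)]$; and for a torus knot the center injects into the abelianization, so $Z(G(K_2))\cap[G(K_2),G(K_2)]=\{1\}$. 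Either way, the missing ingredient is the faithful Fuchsian representation of the quotient, not an unspecified arithmetic relation among the $(\alpha_i,\beta_i)$.
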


\begin{proof}
Now we fix the presentations of $G(K_1)$ and $G(K_2)$ as in Section~\ref{section:1}: 
$$
G(K_1)=\langle x_1,y_1~|~w_1 x_1=y_1 w_1\rangle,\quad
G(K_2)=\langle x_2,y_2~|~w_2 x_2=y_2 w_2\rangle.
$$
Assume $\phi_{S(\alpha_1,\,\beta_1)}(u)=\phi_{S(\alpha_2,\,\beta_2)}(u)\,\psi(u)$ for some $\psi(u)\in \mathbb{Z}[u]$, 
where 
the degree of $\phi_{S(\alpha_1,\,\beta_1)}(u)$ is $n+k$ and $\deg\phi_{S(\alpha_2,\,\beta_2)}(u)=n$. 
Let 
$u_1,u_2,\ldots, u_n$ be the zeros of $\phi_{S(\alpha_2,\,\beta_2)}(u)$ counting multiplicity. 
They give also zeros of $\phi_{S(\alpha_1,\,\beta_1)}(u)$ by the assumption. 

Since 
each zero of the Riley polynomial corresponds to 
a parabolic representation of the knot group, 
we can consider all parabolic representations 
$\rho_1^1,\rho_2^1,\ldots,\rho_{n+k}^1$ for $G(K_1)$ 
and $\rho_1^2,\rho_2^2,\ldots,\rho_{n}^2$ for $G(K_2)$, 
where  
$$
\rho_j^1(x_1)=\rho_j^2(x_2)
=\begin{pmatrix}
1 & 1\\
0 & 1
\end{pmatrix}\quad \text{and}\quad
\rho_j^1(y_1)=\rho_j^2(y_2)
=\begin{pmatrix}
1 & 0\\
-u_j & 1
\end{pmatrix}
$$ 
hold for $1\leq j \leq n$. 
We take the direct products of these representations as 
\begin{align*}
\Phi_1&:G(K_1)\ni g\mapsto \left(\rho_j^1(g)\right)_{j=1,\ldots,n+k}\in \mathit{SL}(2;{\mathbb C})^{n+k},\\
\Phi_2&:G(K_2)\ni g\mapsto \left(\rho_j^2(g)\right)_{j=1,\ldots,n}\in \mathit{SL}(2;{\mathbb C})^n
\end{align*}
and 
put $\Gamma_i=\text{Im}\,\Phi_i$ 
which is a subgroup of $\mathit{SL}(2;{\mathbb C})^{n+k}$ or $\mathit{SL}(2;{\mathbb C})^n$ 
generated by 
$\Phi_i(x_i)$ and $\Phi_i(y_i)$. 
The natural projection to the first $n$ factors 
$p:\mathit{SL}(2;{\mathbb C})^{n+k}\rightarrow  \mathit{SL}(2;{\mathbb C})^n$ 
induces a homomorphism 
$\overline{p}:\Gamma_1\rightarrow \mathit{SL}(2;{\mathbb C})^n$, 
and clearly 
$\overline{p}(\Phi_{1}(x_{1}))=\Phi_{2}(x_{2})$ and 
$\overline{p}(\Phi_{1}(y_{1}))=\Phi_{2}(y_{2})$ hold. 
Hence 
we obtain an epimorphism 
$\overline{p}:\Gamma_1\rightarrow \Gamma_2$. 

By Thurston's hyperbolization theorem for Haken $3$-manifolds, 
any knot in the $3$-sphere $S^3$ is either a torus knot $T(p,q)$, 
or a satellite knot, or a hyperbolic knot, 
i.e. its complement admits a complete hyperbolic metric with finite volume 
(see \cite{T97-1}). By \cite{Schubert54-1} a knot with bridge number $2$ 
cannot be a satellite, hence a $2$-bridge knot is a torus knot $T(2,q)$ where $q$ is odd 
(because the bridge number of $T(p,q)$ is $\min\{|p|,|q|\}$ by \cite{Schubert54-1}), 
or a hyperbolic knot. 
So let us consider the following two cases. 

(1) If $K_2$ is hyperbolic, 
there exists a parabolic and faithful representation of $G(K_2)$ into $\mathit{SL}(2;\mathbb{C})$ (see \cite{T97-1}). 
Hence $\Phi_2$ is injective. 
Applying the inverse of $\Phi_2$, 
we obtain an epimorphism 
\[
\Phi_2^{-1}|_{\Gamma_2}\circ\overline{p}\circ\Phi_1:
G(K_1)\rightarrow G(K_2).\]
This completes the proof in this case.

(2) If $K_2=T(2,q)$, a torus knot with an odd integer $q$, 
there is a parabolic faithful representation 
$G(K_2)/Z(G(K_2))\to \mathit{PSL}(2;\Bbb{R})\subset \mathit{PSL}(2;\Bbb{C})$, 
where $Z(G(K_2))\cong\Bbb{Z}$ is the center of $G(K_2)$ 
(see \cite{S83-1}). 
In fact, 
the image of $G(K_{2})/Z(G(K_{2}))$ is isomorphic to the 
$(2,q,\infty)$-triangle Fuchsian group in $\mathit{PSL}(2;\mathbb{R})$. 
This homomorphism lifts to $\mathit{SL}(2;\Bbb{C})$ and the lift is also faithful, 
so we can apply the same argument as in (1). 
Namely we have an epimorphism $G(K_1)\to G(K_2)/Z(G(K_2))$. 
Finally by a result of Hartley-Murasugi~\cite{HM78-1} 
(see also Section~\ref{section:appendix}), 
this epimorphism lifts to $G(K_2)$. 
\end{proof}

\noindent
{\it Proof of Theorem~\ref{thm:cor}}. 
If $\phi_{S(\alpha_1,\,\beta_1)}(u)=\phi_{S(\alpha_2,\,\beta_2)}(u)$, 
we obtain two epimorphisms 
$\varphi_{12}:G(K_1)\rightarrow G(K_2)$ 
and 
$\varphi_{21}:G(K_2)\rightarrow G(K_1)$ 
by Theorem~\ref{thm:main}. 
Hence we have the epimorphism $\varphi_{21}\circ\varphi_{12}:G(K_1)\to G(K_1)$. 
Since any knot group $G(K)$ is known to be \textit{Hopfian}\, 
(this follows from the facts that any knot group is residually finite \cite{Hempel87-1} 
and a finitely generated, residually finite group is Hopfian \cite{Malcev40-1}), 
namely, every epimorphism of $G(K)$ onto itself is an isomorphism, 
we see that $\varphi_{12}$ and $\varphi_{21}$ are injective. 
Therefore we can conclude $G(K_{1})\cong G(K_{2})$. 
Every $2$-bridge knot is prime, so $K_1$ is equivalent to $K_2$ 
(see \cite[Corollary~2.1]{GL89-1}). This completes the proof.

\section{Lifting problems}\label{section:appendix}

In this section, 
we prove that any epimorphism from any knot group onto the quotient of a torus knot group 
by its center can lift to the torus knot group. 
This is a special case of the result of Hartley and Murasugi (see \cite{HM78-1}), 
but here we give a proof to make this note self-contained. 

Let $K$ be a knot and $G(K)$ its knot group 
with the abelianization $\alpha:G(K)\rightarrow \mathbb{Z}\cong\langle m \rangle$. 
We simply write $G(p,q)$ to $G(T(p,q))$ for the $(p,q)$-torus knot $T(p,q)$. 
It is known that $G(p,q)$ has the following presentation
\[
G(p,q)=\langle x,y\,|\,x^p=y^q\rangle
\]
and its center is the infinite cyclic group generated by $z=x^p=y^q$. 
We write 
\[
\pi:G(p,q)\rightarrow \overline{G}(p,q)=G(p,q)/\langle z\rangle
=
\langle x,y\,|\,x^p=y^q=1\rangle
\]
and $
\gamma:\mathbb{Z}
=
\langle m \rangle\ni m\mapsto\overline{m}\in \mathbb{Z}/{pq}
=
\langle \overline{m} \,|\,\overline{m}^{pq}=1\rangle$. 
Further we take the abelianization 
$\beta:\overline{G}(p,q)\ni x\mapsto\overline{m}^q,~
y\mapsto\overline{m}^p\in\mathbb{Z}/pq$. 

\begin{proposition}[Hartley-Murasugi~\cite{HM78-1}]
If $\overline{\varphi}:G(K)\rightarrow \overline{G}(p,q)$ is an epimorphism 
such that $\beta\circ\overline{\varphi}=\gamma\circ\alpha$, 
then there exists a lift $\varphi:G(K)\rightarrow G(p,q)$ 
of $\overline{\varphi}:G(K)\rightarrow \overline{G}(p,q)$ 
such that $\pi\circ\varphi=\overline{\varphi}$.
\end{proposition}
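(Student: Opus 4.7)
The plan is to recast the lifting problem as a splitting problem for a central extension and then to kill the resulting cohomological obstruction by showing that $H^{2}(G(K);\mathbb{Z})$ vanishes. The short exact sequence
\[
1\longrightarrow \langle z\rangle \longrightarrow G(p,q)\xrightarrow{\pi}\overline{G}(p,q)\longrightarrow 1
\]
is a central extension with infinite cyclic kernel. Pulling it back along $\overline{\varphi}$ produces a new central extension
\[
1\longrightarrow \mathbb{Z}\longrightarrow E\xrightarrow{\mathrm{pr}_{1}} G(K)\longrightarrow 1,
\]
where $E=\{(g,h)\in G(K)\times G(p,q)\mid \overline{\varphi}(g)=\pi(h)\}$. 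A homomorphism $\varphi\colon G(K)\to G(p,q)$ with $\pi\circ\varphi=\overline{\varphi}$ corresponds bijectively to a group-theoretic section $s$ of $\mathrm{pr}_{1}$, by way of $s(g)=(g,\varphi(g))$; and the existence of such a section is equivalent to the vanishing of the extension class of the pullback in $H^{2}(G(K);\mathbb{Z})$.

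The second step is to verify that $H^{2}(G(K);\mathbb{Z})=0$ for every knot group. By Papakyriakopoulos' sphere theorem the knot exterior $X_{K}=S^{3}\setminus\mathrm{int}\,N(K)$ is aspherical, hence a model of $K(G(K),1)$, so group cohomology coincides with the singular cohomology of $X_{K}$. Alexander duality in $S^{3}$ gives $H_{1}(X_{K};\mathbb{Z})\cong\mathbb{Z}$ and $H_{2}(X_{K};\mathbb{Z})=0$, and then the universal coefficient theorem yields
\[
H^{2}(G(K);\mathbb{Z})\cong \mathrm{Hom}(H_{2}(X_{K}),\mathbb{Z})\oplus \mathrm{Ext}(H_{1}(X_{K}),\mathbb{Z})=0.
\]
Consequently the pullback extension splits, and composing any section with $\mathrm{pr}_{2}\colon E\to G(p,q)$ produces the required lift $\varphi$.

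The main obstacle is the second step, though only mildly so: it rests on two standard quotable facts about knot complements (asphericity and Alexander duality), neither of which one would want to reprove in a short note. The hypothesis $\beta\circ\overline{\varphi}=\gamma\circ\alpha$ does not itself appear in the obstruction argument above; I would read it as a natural compatibility assumption ensuring that $\overline{\varphi}$ respects the preferred meridional generator, and remark that the lift is unique only up to twisting by an arbitrary homomorphism $G(K)\to\langle z\rangle\cong\mathbb{Z}$, so this compatibility condition is what one would use if one wanted to pin down a canonical lift.
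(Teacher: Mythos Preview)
Your argument is correct, but it takes a genuinely different route from the paper's.

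The paper proceeds by an explicit, elementary construction: it identifies $G(p,q)$ with the fiber product
\[
\gamma^{*}(\overline{G}(p,q))=\{(\bar g,m^{s})\in \overline{G}(p,q)\times\mathbb{Z}\mid \beta(\bar g)=\gamma(m^{s})\}
\]
and then simply sets $\varphi(g)=(\overline{\varphi}(g),\alpha(g))$. The compatibility hypothesis $\beta\circ\overline{\varphi}=\gamma\circ\alpha$ is exactly what makes this land in the fiber product, so in the paper's proof that hypothesis is essential, not incidental.

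Your approach via the cohomological obstruction to splitting a pulled-back central extension is more conceptual and in fact proves more: since $H^{2}(G(K);\mathbb{Z})=0$, \emph{every} homomorphism $G(K)\to\overline{G}(p,q)$ lifts, with no compatibility condition required for existence. The price is that you invoke asphericity of knot complements (Papakyriakopoulos) and Alexander duality, which are substantially heavier than anything in the paper's argument. Since the stated purpose of this section is to keep the note self-contained, the paper's bare-hands fiber-product construction is better suited to that aim; your argument, on the other hand, explains \emph{why} the lift exists and shows that the hypothesis $\beta\circ\overline{\varphi}=\gamma\circ\alpha$ is only needed to single out a canonical lift among the $\mathrm{Hom}(G(K),\mathbb{Z})\cong\mathbb{Z}$ many choices.
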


\begin{proof}
The torus knot group 
$G(p,q)=\gamma^\ast(\overline{G}(p,q))$ can be described as the fiber product 
\[
\begin{CD}
\gamma^*(\overline{G}(p,q)) @>\widetilde{\gamma}~=~\pi>>  \overline{G}(p,q)\\
 @V\widetilde\beta VV       @VV\beta V\\
\mathbb{Z}@>>{\gamma}>\mathbb{Z}/{pq}.
\end{CD}
\] 
More precisely, 
the fiber product $\gamma^\ast(\overline{G}(p,q))$ is 
a subgroup of $\overline{G}(p,q)\times \mathbb{Z}$ as 
\[
\gamma^*\left(\overline{G}(p,q)\right)
=\left\{\left(\overline{g},m^s\right)\in 
\overline{G}(p,q)\times \mathbb{Z}\,|\,\beta\left(\overline{g}\right)=\gamma(m^s)
\right\}.
\]
Since there exists an exact sequence 
\[
1\rightarrow\mathrm{Ker}(\pi)=\langle (1,m^{pq})\rangle
\rightarrow
\gamma^*(\overline{G}(p,q))
\rightarrow
\overline{G}(p,q)
\rightarrow
1,
\]
we can see $G(p,q)\cong \gamma^*(\overline{G}(p,q))$. 

By the assumption, 
we have an epimorphism $\overline{\varphi}:G(K)\rightarrow \overline{G}(p,q)$ 
such that $\beta\circ\overline{\varphi}=\gamma\circ\alpha$. 
For any $g\in G(K)$ we then define 
\[
\varphi(g)=(\overline{\varphi}(g), \alpha(g))\in\overline{G}(p,q)\times\mathbb{Z},
\]
and 
it gives an epimorphism 
$\varphi:G(K)\rightarrow G(p,q)$ satisfying $\pi\circ\varphi=\overline{\varphi}$.
\end{proof}

\noindent
\textit{Acknowledgments}.\/
After finishing this work, 
Makoto Sakuma kindly told us the paper~\cite{Sakuma07} which shows the existence 
of an epimorphism between groups of $2$-bridge links from the view point of 
Markoff trace maps. 
This paper was written while the authors were visiting 
Aix-Marseille University. 
They would like to express their sincere thanks for the hospitality. 
The authors would like to thank Michel Boileau and Michael Heusener for helpful comments. 
This research was supported in part by JSPS KAKENHI  25400101 and 26400096. 


\end{document}